\documentclass[11pt]{amsart}

\usepackage[T1]{fontenc}
\usepackage{lmodern}
\usepackage[margin=1.12in]{geometry}
\usepackage{amsmath,amssymb,amsthm,mathtools}
\usepackage{microtype}
\usepackage[colorlinks=true,linkcolor=blue,citecolor=blue,urlcolor=blue]{hyperref}

\newtheorem{theorem}{Theorem}[section]
\newtheorem{proposition}[theorem]{Proposition}
\newtheorem{lemma}[theorem]{Lemma}
\newtheorem{corollary}[theorem]{Corollary}
\theoremstyle{definition}
\newtheorem{definition}[theorem]{Definition}

\theoremstyle{remark}
\newtheorem{remark}[theorem]{Remark}

\newcommand{\C}{\mathbb C}

\newcommand{\Rm}{\operatorname{Rm}}
\newcommand{\II}{\mathrm{II}}

\newcommand{\eps}{\varepsilon}

\newcommand{\Sym}{\operatorname{Sym}}

\title[Curvature growth and holomorphic splitting]
{Curvature growth and holomorphic splitting for K\"ahler metrics with cone singularities}
\author{Martin de Borbon}
\date{\today}
\address{Department of Mathematical Sciences, Loughborough University, Schofield Building, Loughborough LE11 3TU, United Kingdom}
\email{m.de-borbon@lboro.ac.uk}

\subjclass[2020]{Primary 53C55; Secondary 32Q20, 53C25}
\keywords{K\"ahler metrics with cone singularities, second fundamental form, normal bundle sequence, Bochner normal coordinates}

\begin{document}

\begin{abstract}
	We study K\"ahler metrics with cone angle \(2\pi\beta\) along a smooth divisor, in the range \(1/2<\beta<1\). We show that bounded Riemann curvature near the divisor forces the normal bundle sequence to split holomorphically, confirming a condition proposed by Donaldson.
\end{abstract}

\maketitle

\section{Introduction}

K\"ahler metrics with cone singularities along a smooth divisor arise naturally throughout complex differential geometry; see, for example, \cite{Rubinstein2014} and the references therein. A basic local regularity question is whether the Riemann curvature remains bounded near the divisor when the cone angle is \(2\pi\beta\), with
\begin{equation}\label{eq:beta-range}
	\frac{1}{2} < \beta < 1.
\end{equation}

Arezzo--Della Vedova--La Nave \cite{ADVLN2018} proved the existence of bounded-curvature cone metrics under the assumption that the divisor admits a holomorphic tubular neighbourhood. They also recorded a suggestion of Donaldson \cite[p.~282]{ADVLN2018} that the optimal holomorphic condition for the existence of such metrics should instead be the splitting of the normal bundle sequence
\begin{equation}\label{eq:normal-sequence-intro}
	0\longrightarrow TD\longrightarrow TX|_D\longrightarrow N_D\longrightarrow0.
\end{equation}
In this paper we prove the necessary direction: bounded curvature forces the sequence \eqref{eq:normal-sequence-intro} to split holomorphically.

\subsection*{Outline}

Section~\ref{sec:admissible} introduces admissible metrics (Definition~\ref{def:admissible}), proves their invariance under divisor-preserving holomorphic coordinate changes (Lemma~\ref{lem:stability-admissibility}), and associates to their leading coefficients a boundary Hermitian metric \(h\) on \(TX|_D\) (Propositions~\ref{prop:boundary-data} and~\ref{prop:boundary-hermitian-metric}). Proposition~\ref{prop:relative-Bochner} constructs normal coordinates adapted to \(D\), in which the second fundamental form \(\II_h\) of \(TD\subset TX|_D\) is precisely the first term obstructing the metric from having the normal form of the standard flat cone metric. Section~\ref{sec:curvature} computes the leading mixed curvature (Theorem~\ref{thm:main}); this gives the curvature growth when \(\II_h\neq0\) (Corollary~\ref{cor:growth}) and shows that bounded curvature forces the normal bundle sequence \eqref{eq:normal-sequence-intro} to split holomorphically (Corollary~\ref{cor:splitting}).

\subsection*{Acknowledgements}
I learned about K\"ahler metrics with cone singularities, and in particular
about the boundary Hermitian metric \(h\) on \(TX|_D\), from Simon Donaldson. I am
very grateful to him for his beautiful and inspiring teaching. I also want to thank Yang Li for a discussion on admissible metrics and adapted normal coordinates during his visit to Loughborough in 2025.

\section{Boundary Hermitian metric and adapted normal coordinates}\label{sec:admissible}

\subsection{Admissible metrics}
Fix a smooth divisor \(D\subset X\). An \emph{adapted holomorphic coordinate system} is a system
\[
 (z^1,z^2,\ldots,z^n)=(z,y^2,\ldots,y^n)
\]
in which \(D=\{z=0\}\). We use the index \(1\) for the normal direction and indices \(a,b,c,d\) for tangential directions. Set \(\rho=|z|\).
The standard flat cone metric of angle \(2\pi\beta\) is
\begin{equation}\label{eq:model-cone}
 g_\beta=\rho^{2\beta-2}|dz|^2+\sum_{a=2}^n|dy^a|^2.
\end{equation}
We use the standard terminology that a K\"ahler metric \(g\) on \(X \setminus D\) has \emph{cone angle \(2\pi\beta\) along \(D\)} when, near every point of \(D\), it is uniformly quasi-isometric in adapted holomorphic coordinates to \eqref{eq:model-cone}.

\begin{definition}[Admissible metrics]\label{def:admissible}
Fix \(\beta \in (1/2,1)\).
A K\"ahler metric \(g\) of cone angle \(2\pi\beta\) along \(D\) is called \emph{admissible} if, near every \(p\in D\), there are adapted holomorphic coordinates \((z,y)\) and a real local K\"ahler potential \(\Phi\) for \(g\) of the form
\begin{equation}\label{eq:admissible-potential}
 \Phi
 =A(y,\bar y)+\bar z\,Q(y,\bar y)+z\,\overline{Q(y,\bar y)}
 +\frac{s(y,\bar y)^\beta}{\beta^2}|z|^{2\beta}+\Psi,
\end{equation}
where \(A=A(y,\bar y)\) is smooth and real-valued, \(Q=Q(y,\bar y)\) is smooth and complex-valued, and \(s=s(y,\bar y)\) is smooth, real-valued, and strictly positive. Moreover, the Hermitian matrix
\[
 H_{a\bar b}:=\partial_a\partial_{\bar b}A
\]
is positive definite. The remainder \(\Psi\) is real-valued and smooth away from \(D\). We write \(f=O_{\mathrm{con}}(\rho^\mu)\) if, after applying any finite product of the vector fields
\[
 z\partial_z,\qquad \bar z\partial_{\bar z},\qquad
 \partial_{y^a},\qquad \partial_{\bar y^a},
\]
the resulting function is \(O(\rho^\mu)\). We require that there is \(\eps>0\) such that
\begin{equation}\label{eq:admissible-remainder}
 \Psi=O_{\mathrm{con}}\bigl(\rho^{2\beta+\beta\eps}\bigr).
\end{equation}
\end{definition}

\begin{remark}\label{rem:epsilon-choice}
After decreasing \(\eps>0\) if necessary, we may always assume
\begin{equation}\label{eq:epsilon-range}
 2\beta+\beta\eps<2.
\end{equation}
With this choice, ordinary smooth terms of order \(O(|z|^2)\) are allowed in \(\Psi\).
\end{remark}

\begin{remark}\label{rem:normal-derivatives}
Since \(\partial_z=z^{-1}(z\partial_z)\), and similarly for \(\partial_{\bar z}\), \eqref{eq:admissible-remainder} implies the corresponding ordinary normal derivative estimates of every order, with one power of \(\rho^{-1}\) lost for each derivative. The curvature calculation in Section~\ref{sec:curvature} uses only derivatives of total normal order at most four.
\end{remark}

\begin{remark}
Since \(2\beta>1\), we have \(\Psi=o(|z|)\), so the terms linear in \(z,\bar z\) cannot be absorbed into \(\Psi\). Changing the K\"ahler potential \(\Phi\) by adding a pluriharmonic term \(zf(y)+\overline{z f(y)}\), with \(f\) holomorphic, changes \(Q\) by addition of the antiholomorphic function \(\overline{f(y)}\); see the proof of Proposition~\ref{prop:boundary-data}.
\end{remark}

\begin{remark}\label{rem:JMR}
K\"ahler--Einstein metrics with cone singularities of angle \(2\pi\beta\in(\pi,2\pi)\) are admissible in the sense of Definition~\ref{def:admissible}. This follows from the polyhomogeneous regularity theory developed in \cite{JMR2016}; see in particular \cite[Theorem~2 and Proposition~4.4, equation~(57)]{JMR2016}.
\end{remark}

\subsection{The boundary Hermitian metric}

Let \(g\) be admissible. In adapted holomorphic coordinates \((z,y)\), let
\(\Phi\) be a local K\"ahler potential as in
\eqref{eq:admissible-potential}, and use the notation \(A,Q,s,\Psi\) and
\(H_{a\bar b}\) from Definition~\ref{def:admissible}. Set
\begin{equation}\label{eq:q-def}
 q_a:=\partial_aQ.
\end{equation}

\begin{proposition}\label{prop:boundary-data}
The local data \(H=(H_{a\bar b})\), \(s\), and the complex line spanned by
\begin{equation}\label{eq:Lg-local}
 \ell=\partial_z-\overline{q_b}H^{a\bar b}\partial_{y^a}
\end{equation}
define intrinsically along \(D\) a K\"ahler metric \(h_D\) on \(TD\), a
Hermitian metric \(h_N\) on \(N_D=TX|_D/TD\) satisfying
\[
 |[\partial_z]|_{h_N}^2=s,
\]
and a smooth complex line subbundle \(L_g\subset TX|_D\) transverse to
\(TD\).
\end{proposition}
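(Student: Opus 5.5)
The plan is to characterise each of the three objects as a limit of the metric \(g\) itself, evaluated on smooth holomorphic vector fields, rather than through the potential \(\Phi\). Intrinsic-ness then follows almost automatically. What remains is to check that the limits exist, are computed by \(H\), \(q_a\), \(s\), and transform correctly under a change of adapted coordinates.

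\emph{Step 1: limits of the metric coefficients.} Differentiate \eqref{eq:admissible-potential}. By \eqref{eq:admissible-remainder} and Remark~\ref{rem:normal-derivatives}:
\[
\partial_a\partial_{\bar b}\Psi=O(\rho^{2\beta+\beta\eps}),\qquad
\partial_z\partial_{\bar b}\Psi=O(\rho^{2\beta+\beta\eps-1}),\qquad
\partial_z\partial_{\bar z}\Psi=O(\rho^{2\beta+\beta\eps-2}).
\]
The term \(s^\beta|z|^{2\beta}/\beta^2\) contributes \(O(\rho^{2\beta})\) to \(g_{a\bar b}\) and \(O(\rho^{2\beta-1})\) to \(g_{1\bar b}\). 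Since \(2\beta>1\), we get, uniformly as \(z\to0\),
\[
g_{a\bar b}\to H_{a\bar b},\qquad g_{1\bar b}\to \partial_z\partial_{\bar b}(z\overline{Q})=\overline{q_b},\qquad \rho^{2-2\beta}g_{1\bar1}\to s^\beta .
\]
For the last limit, \(\partial_z\partial_{\bar z}\) of the cone term is \(s^\beta|z|^{2\beta-2}\) plus terms \(O(\rho^{2\beta-1})\) coming from \(y\)-derivatives of \(s\).

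\emph{Step 2: intrinsic definitions.} Let \(V\) be a smooth \((1,0)\) vector field near \(D\) and \(W\) a smooth \((1,0)\) vector field tangent to \(D\). By Step 1, \(\lim g(V,\overline W)\) exists and depends only on \(V|_D\) and \(W|_D\). Therefore
\[
h_D(w,w'):=\lim g(W,\overline{W'})=H_{a\bar b}w^a\overline{w'^b}
\]
is a well-defined Hermitian metric on \(TD\). It is positive definite because \(H>0\). It is Kähler since \(h_D=i\partial\bar\partial(A|_D)\) on \(D\).

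Next set
\[
L_g:=\{v\in TX|_D:\ \lim g(V,\overline W)=0\ \text{for all }W\text{ tangent to }D\},
\]
the limiting \(g\)-orthogonal complement of \(TD\). In coordinates the condition reads \(v^1\overline{q_b}+H_{a\bar b}v^a=0\). Its solution space is exactly the span of \(\ell\) in \eqref{eq:Lg-local}. This line is smooth, and it is transverse to \(TD\) because its \(\partial_z\)-component equals \(1\).

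As a consistency check, a different potential for the same \(g\) differs from \(\Phi\) by a pluriharmonic function. This function is bounded near \(D\), so it extends across \(D\) as \(\mathrm{Re}\,F\) with \(F\) holomorphic. Expanding \(F=F_0(y)+zF_1(y)+O(z^2)\) changes \(A\) by a pluriharmonic function of \(y\) and \(Q\) by an antiholomorphic function. Hence \(H\) and \(q_a\) are unchanged, as the limit description predicts.

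\emph{Step 3: the normal metric.} Let \((z',y')\) be another adapted system, so that \(z'=uz\) with \(u\) holomorphic and nonvanishing. Both \(\rho^{2-2\beta}g_{1\bar1}\) and its primed analogue converge by Step 1. The factor \(|z|^{2-2\beta}\) is not intrinsic, and this transformation rule is the step I expect to need the most care. The plan is to transform the leading singular part of \(g\) directly. The quantity \(s^\beta|z|^{2\beta-2}|dz|^2\) agrees with \(s'^\beta|z'|^{2\beta-2}|dz'|^2\) up to lower order. On the other hand, \(|z'|^{2\beta-2}|dz'|^2=|u|^{2\beta}|z|^{2\beta-2}|dz|^2\) to leading order. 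Comparing gives \(s^\beta=s'^\beta|u|^{2\beta}\) on \(D\), that is, \(s=s'|u|^2\).

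Since \([\partial_z]=u\,[\partial_{z'}]\) in \(N_D\), the prescription \(|[\partial_z]|^2_{h_N}=s\) is then coordinate independent. It defines a smooth Hermitian metric on \(N_D\), positive because \(s>0\).

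A cross-check is to substitute \(z'=uz\), \(y'=\phi(z,y)\) into \eqref{eq:admissible-potential}. The cone term becomes \(s'^\beta|u|^{2\beta}|z|^{2\beta}\bigl(1+O(\rho)\bigr)/\beta^2\). The error \(O(\rho^{2\beta+1})\) is absorbed into \(\Psi\), using \(\beta\eps<1\), which holds by \eqref{eq:epsilon-range}. The \(z\)-linear terms of \(A(y')\) are absorbed into the new \(Q\). This confirms the rule \(s=s'|u|^2\) at the level of potentials.
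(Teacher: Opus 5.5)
Your proposal is correct, but it takes a genuinely different route from the paper. The paper works entirely at the level of the potential. It substitutes a first-order adapted coordinate change \(z=u(x)w+O(w^2)\), \(y=F(x)+wW(x)+O(w^2)\) into \eqref{eq:admissible-potential}. Comparing coefficients of \(\bar w\) and \(|w|^{2\beta}\), it reads off the laws \(Q'=\bar u\,Q+\overline{W^b}\,\partial_{\bar b}A\) and \(s'=|u|^2s\). It then differentiates to get the law for \(q\), and checks \(\ell'=u\ell\) by hand.

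You instead realise \(h_D\) and \(L_g\) as boundary values of \(g\) itself. For \(W\) tangent to \(D\), \(\lim g(V,\overline W)=V^1\overline{q_b}\,\overline{W^b}+H_{a\bar b}V^a\overline{W^b}\), which is precisely \(h(v,\bar w)\) for the matrix \eqref{eq:h-matrix}. Since \(g\) and the vector fields are coordinate-free, this buys you several things at once:
\begin{itemize}
\item independence of both the chart and the choice of potential comes for free;
\item you never need the transformation law for \(Q\), nor the uniqueness of the decomposition \eqref{eq:admissible-potential} that the coefficient comparison implicitly uses;
\item \(L_g\) gets a direct meaning as the limiting \(g\)-orthogonal complement of \(TD\), which the paper only records in a remark after Proposition~\ref{prop:boundary-hermitian-metric}.
\end{itemize}
The one place a transformation computation is unavoidable is \(h_N\), because the normalisation \(\rho^{2-2\beta}\) depends on the chart. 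There, your comparison of leading singular terms reproduces the paper's \(s'=|u|^2s\). What the paper's route buys in exchange is explicit transformation laws for \(Q\) and \(q\). The same substitution, specialised to shears \(y'=y+zV(y)\), is what drives the normalisation of \(Q\) in Proposition~\ref{prop:relative-Bochner}.

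A few points of precision.
\begin{itemize}
\item In Step~2, say explicitly that \(g_{1\bar1}V^1\overline{W^1}=O(\rho^{2\beta-2})\cdot O(\rho)=O(\rho^{2\beta-1})\to0\), because \(W^1\) vanishes on \(D\). This is a second use of \(\beta>1/2\).
\item In Step~3, make ``agrees up to lower order'' rigorous as follows. Write \(\partial_z=(u+z\,\partial_z u)\partial_{z'}+(\partial_z y'^\alpha)\partial_{y'^\alpha}\). By Step~1 the primed components \(g_{1'\bar\alpha'}\) and \(g_{\alpha'\bar\beta'}\) are bounded, so \(g(\partial_z,\overline{\partial_z})=|u|^2g_{1'\bar1'}+O(1)\), and \(O(1)=o(\rho^{2\beta-2})\).
\item \(\partial_z\partial_{\bar z}\) of the cone term is exactly \(s^\beta\rho^{2\beta-2}\). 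Since \(s\) does not depend on \(z\), there are no correction terms coming from derivatives of \(s\).
\end{itemize}
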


\begin{proof}
Changing the K\"ahler potential by the real part of a holomorphic
function changes \(A\) by a tangential pluriharmonic function and \(Q\)
by an antiholomorphic function of \(y\). Hence \(H\), \(q\), and \(s\)
are unchanged.

Let \((w,x)\) be another system of adapted holomorphic coordinates. Along
\(D=\{w=0\}\), write the inverse coordinate change to first order as
\[
 z=u(x)w+O(w^2),
 \qquad
 y^a=F^a(x)+wW^a(x)+O(w^2),
\]
where \(F=(F^a)\) is the induced holomorphic change of coordinates on
\(D\), \(u\) is holomorphic and nonvanishing, and \(W=(W^a)\) is
holomorphic. 

The tangential term transforms by \(A' = A \circ F\), so \( i\partial\bar\partial A'=F^*(i\partial\bar\partial A)\), and therefore \(H\) defines intrinsically a K\"ahler metric \(h_D\) on \(TD\).
Substituting the coordinate change into
\eqref{eq:admissible-potential} and comparing the coefficients of
\(\bar w\) and \(|w|^{2\beta}\) gives
\[
 Q'=\overline{u}\,Q+\overline{W^b}\,\partial_{\bar b}A,
 \qquad
 s'=|u|^2s.
\]
Differentiating the first identity, and writing
\(F^a_\alpha=\partial F^a/\partial x^\alpha\), gives
\[
 q'_\alpha
 =F^a_\alpha
 \bigl(\overline{u}\,q_a
       +H_{a\bar b}\overline{W^b}\bigr).
\]
Since \([\partial_w]=u[\partial_z]\) in \(N_D\), the transformation law
for \(s\) defines a Hermitian metric \(h_N\) on \(N_D\) by
\[
 |[\partial_z]|_{h_N}^2=s.
\]

Finally, along \(D\),
\[
 \partial_w=u\,\partial_z+W^a\partial_{y^a},
 \qquad
 \partial_{x^\alpha}=F^a_\alpha\partial_{y^a}.
\]
Using the transformation laws for \(H\) and \(q\) in
\eqref{eq:Lg-local} gives \(\ell'=u\ell\). Thus the line spanned by
\(\ell\) defines intrinsically a smooth complex line subbundle
\(L_g\subset TX|_D\). Since \([\ell]=[\partial_z]\neq0\) in \(N_D\),
it is transverse to \(TD\).
\end{proof}

\begin{proposition}\label{prop:boundary-hermitian-metric}
The boundary data of Proposition~\ref{prop:boundary-data} determine a
unique Hermitian metric \(h\) on \(TX|_D\) such that
\begin{equation}\label{eq:boundary-metric-geometric}
 h|_{TD}=h_D,
 \qquad
 TD\perp_h L_g,
 \qquad
 h|_{L_g}=\pi^*h_N,
\end{equation}
where \(\pi:L_g\to N_D\) is the quotient isomorphism. In the coordinate
frame \((\partial_z,\partial_{y^a})\),
\begin{equation}\label{eq:h-matrix}
 h=
 \begin{pmatrix}
 s+q^*H^{-1}q&q^*\\
 q&H
 \end{pmatrix},
\end{equation}
where \(q=(q_a)\) is regarded as a column vector.
\end{proposition}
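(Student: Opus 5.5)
Existence and uniqueness are essentially linear algebra once the three conditions in \eqref{eq:boundary-metric-geometric} are in place. Since \(L_g\) is transverse to \(TD\) and has rank one, we have a smooth splitting \(TX|_D=TD\oplus L_g\) by Proposition~\ref{prop:boundary-data}. A Hermitian metric on a direct sum is determined by three pieces of data: its restrictions to the two summands, and the cross term. The conditions prescribe the first restriction as \(h_D\), the second as \(\pi^*h_N\) (a genuine metric, because \(\pi\) is an isomorphism), and the cross term as zero. So \(h\) is forced to be the orthogonal sum \(h_D\oplus\pi^*h_N\). That sum is positive definite and smooth, which gives existence and uniqueness at once. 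Every ingredient is intrinsic by Proposition~\ref{prop:boundary-data}, so \(h\) is independent of all choices and needs no further coordinate check.

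For the matrix formula, I would work in the frame \((\ell,\partial_{y^a})\) adapted to the splitting. In this frame \(h\) is block diagonal. The \(TD\) block is \(h(\partial_a,\partial_b)=H_{a\bar b}\), using that \(h_D\) is given by \(H\). For the \(L_g\) block, note \([\ell]=[\partial_z]\) in \(N_D\), so \(|\ell|_h^2=s\). The two cross terms vanish.

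Next I would pass to the coordinate frame via \(\partial_z=\ell+\overline{q_b}H^{a\bar b}\partial_{y^a}\); write \(v^a=\overline{q_b}H^{a\bar b}\) for the tangential part. Then
\[
 h(\partial_z,\partial_z)=s+H_{a\bar c}v^a\overline{v^c}=s+q^*H^{-1}q,
\]
by contracting \(H\) against \(H^{-1}\) twice. The mixed entry is \(h(\partial_z,\partial_{y^c})=v^aH_{a\bar c}=\overline{q_c}\), which is the \(q^*\) entry. The Hermitian conjugate of this gives the \(q\) block. This reproduces \eqref{eq:h-matrix}.

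The only real obstacle is bookkeeping of conventions. One has to fix which slot of \(h\) is conjugate-linear, and check whether \(q\) or \(\bar q\) lands in the off-diagonal block. This must agree with the convention implicit in \eqref{eq:Lg-local}, namely that \(\ell\) is \(h\)-orthogonal to \(TD\), i.e.\ \(h(\ell,\partial_{y^c})=0\). I would settle it by checking directly that with the matrix \eqref{eq:h-matrix} one gets \(h(\ell,\partial_{y^c})=\overline{q_c}-\overline{q_b}H^{a\bar b}H_{a\bar c}=0\). As a sanity check, the Schur complement of \(H\) in \eqref{eq:h-matrix} is \(s>0\), which confirms positivity.
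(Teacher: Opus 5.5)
Your proposal is correct and follows essentially the same route as the paper: the splitting \(TX|_D=TD\oplus L_g\) with the orthogonal sum \(h_D\oplus\pi^*h_N\) gives existence and uniqueness, and the matrix comes from writing \(\partial_z=\ell+\nu^a\partial_{y^a}\) with \(\nu^a=H^{a\bar b}\overline{q_b}\) and using \(\ell\perp_h TD\) and \(|\ell|_h^2=s\). The only differences are that you compute the conjugate entry \(h_{1\bar c}=\overline{q_c}\) where the paper computes \(h_{a\bar 1}=q_a\), and that you add orthogonality and Schur-complement checks, which are harmless consistency checks rather than needed steps.
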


\begin{proof}
Since \(L_g\) is transverse to \(TD\), the quotient map restricts to an
isomorphism \(\pi:L_g\to N_D\). Thus \eqref{eq:boundary-metric-geometric}
defines \(h\) uniquely by declaring \(TD\) and \(L_g\) orthogonal and
using \(h_D\) and \(\pi^*h_N\) on the two summands.

To obtain the matrix formula, set
\[
 \nu^a=H^{a\bar b}\overline{q_b}.
\]
Then
\[
 \ell=\partial_z-\nu^a\partial_{y^a},
 \qquad
 \partial_z=\ell+\nu^a\partial_{y^a}.
\]
By construction, \(\ell\perp_h TD\) and
\[
 |\ell|_h^2=|[\ell]|_{h_N}^2
 =|[\partial_z]|_{h_N}^2=s.
\]
It follows that
\[
 h_{a\bar b}=H_{a\bar b},
 \qquad
 h_{a\bar1}
 =H_{a\bar b}\overline{\nu^b}
 =q_a,
\]
and
\[
 h_{1\bar1}
 =s+H_{a\bar b}\nu^a\overline{\nu^b}
 =s+q^*H^{-1}q.
\]
This is precisely \eqref{eq:h-matrix}.
\end{proof}

\begin{remark}
The line \(L_g\) is the limit, as \(z\to0\), of the \(g\)-orthogonal complements of the hypersurfaces \(\{z=\mathrm{constant}\}\); see \cite[\S 6.2]{deBorbonThesis}.
\end{remark}

\begin{definition}[Boundary Hermitian metric]
\label{def:boundary-metric}
The metric \(h\) of Proposition~\ref{prop:boundary-hermitian-metric} is
called the \emph{boundary Hermitian metric} associated with \(g\). Its
\emph{second fundamental form} is
\begin{equation}\label{eq:II-def}
 \II_h(U,V)=\pi_{L_g}\bigl(\nabla^h_UV\bigr),
 \qquad U,V\in T^{1,0}D,
\end{equation}
where \(\nabla^h\) is the Chern connection of \((TX|_D,h)\) and
\(\pi_{L_g}\) is the \(h\)-orthogonal projection onto \(L_g\). 
\end{definition}

\subsection{Adapted normal coordinates}\label{sec:bochner}

Now we use the Bochner construction \cite{Bochner1947} which removes low-order Taylor terms of a K\"ahler potential by holomorphic coordinate changes and pluriharmonic changes of potential. Along a divisor the allowed coordinate group is smaller: an adapted change must preserve \(D\), so its normal component has the form
\begin{equation}\label{eq:adapted-change}
 z'=z\,u(z,y),
 \qquad
 y'=F(z,y),
 \qquad
 u|_D\neq0.
\end{equation}
In particular, the new normal coordinate cannot contain a pure tangential quadratic term. The first tensor left by this missing freedom is the second fundamental form.

\begin{lemma}[Coordinate invariance of admissibility]\label{lem:stability-admissibility}
Let \(g\) be admissible in adapted coordinates \((z,y)\), with \(\eps\) satisfying \eqref{eq:epsilon-range}. Then, after any divisor-preserving holomorphic coordinate change
\begin{equation}\label{eq:general-adapted-change}
 z=w\,u(w,x),\qquad y=F(w,x),\qquad u(0,x)\neq0,
\end{equation}
the potential can again be written in the form \eqref{eq:admissible-potential}, and the remainder satisfies \eqref{eq:admissible-remainder} with the same \(\eps\).
\end{lemma}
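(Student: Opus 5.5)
Substitute the coordinate change \eqref{eq:general-adapted-change} directly into each term of \eqref{eq:admissible-potential} and sort the result by order in \(w\): terms independent of \(w\), terms linear in \(w,\bar w\), the \(|w|^{2\beta}\) term, and everything else. Everything else must go into the new remainder \(\Psi'\), and we must show \(\Psi'=O_{\mathrm{con}}(\rho'^{2\beta+\beta\eps})\) with \(\rho'=|w|\). The first observation is that \(\rho\) and \(\rho'\) are comparable, since \(|z|=|w|\,|u|\) with \(u\) nonvanishing. The second is that the vector fields \(w\partial_w,\ \bar w\partial_{\bar w},\ \partial_{x^\alpha},\ \partial_{\bar x^\alpha}\) are combinations, with coefficients that are \(O_{\mathrm{con}}(1)\), of \(z\partial_z,\ \bar z\partial_{\bar z},\ \partial_{y^a},\ \partial_{\bar y^a}\). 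For instance, \(\partial_{x^\alpha}=(\partial_\alpha u/u)\,z\partial_z+\partial_\alpha F^a\,\partial_{y^a}\). The point is that \(w\partial_w\) applied to \(y=F(w,x)\) produces \(w\partial_wF\), which is \(O(|w|)\) and smooth, so it is harmless. Hence the class \(O_{\mathrm{con}}(\rho^\mu)\) is invariant under the change of coordinates. In particular the old remainder \(\Psi\), pulled back, still satisfies \eqref{eq:admissible-remainder} in the new coordinates.

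Next, treat the smooth terms. Expand \(A(F(w,x),\overline{F(w,x)})\) in Taylor series in \(w,\bar w\). The zeroth-order part is \(A'(x,\bar x)=A(F(0,x),\overline{F(0,x)})\). The first-order part is \(w\cdot(\ldots)+\bar w\cdot(\ldots)\), and it contributes to \(Q'\). All quadratic and higher terms are smooth and \(O(|w|^2)\); by Remark~\ref{rem:epsilon-choice} we have \(2>2\beta+\beta\eps\), so these are \(O_{\mathrm{con}}(\rho'^{2\beta+\beta\eps})\) and can be absorbed into \(\Psi'\). The same argument applies to \(\bar zQ(y,\bar y)+z\overline{Q}\): substituting \(z=wu\) and \(y=F\), the linear part is \(\bar w\,\overline{u(0,x)}\,Q(F(0,x))+\text{c.c.}\), and the remainder is smooth and \(O(|w|^2)\). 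Collecting the \(\bar w\) coefficients recovers the formula \(Q'=\bar uQ+\overline{W^b}\partial_{\bar b}A\) used in Proposition~\ref{prop:boundary-data}. The new tangential Hessian is \(H'=(dF_0)^*H\,dF_0\), which is positive definite because \(F(0,\cdot)\) is a biholomorphism of \(D\).

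The main obstacle is the cone term \(s(y)^\beta\beta^{-2}|z|^{2\beta}\). Write it as
\[
 \beta^{-2}\,s(F)^\beta\,|u(w,x)|^{2\beta}\,|w|^{2\beta}.
\]
Set \(s'(x)=s(F(0,x))\,|u(0,x)|^2\); this is the leading coefficient. The difference
\[
 \beta^{-2}\bigl(s(F)^\beta|u|^{2\beta}-s'(x)^\beta\bigr)|w|^{2\beta}
\]
has a prefactor that is a smooth function of \((w,x)\) vanishing at \(w=0\), because \(s>0\) and \(u\neq0\) make the power \(\beta\) smooth. So the prefactor is \(O(|w|)\), and the whole term is \(O(\rho'^{2\beta+1})\). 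Since \(1\ge\beta\eps\), this is \(O(\rho'^{2\beta+\beta\eps})\). The conormal derivatives preserve this bound, because \(w\partial_w\) and \(\bar w\partial_{\bar w}\) act on \(|w|^{2\beta}\) by multiplication by \(\beta\), and they preserve the class of smooth functions vanishing on \(D\).

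Putting the pieces together gives the form \eqref{eq:admissible-potential} in the coordinates \((w,x)\), with the same \(\eps\). The only delicate point is the product rule: a smooth \(O(|w|)\) factor times an \(O_{\mathrm{con}}(|w|^{2\beta})\) factor is \(O_{\mathrm{con}}(|w|^{2\beta+1})\). This holds by Leibniz, since smooth functions are \(O_{\mathrm{con}}(1)\), and smooth functions vanishing on \(D\) are \(O_{\mathrm{con}}(\rho')\) because \(w\partial_w\) preserves vanishing on \(D\).
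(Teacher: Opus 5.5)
Your proposal is correct and follows essentially the same route as the paper. You split the substituted potential into three parts:
\begin{itemize}
\item the smooth part \(A+\bar zQ+z\overline{Q}\), whose \(O(|w|^2)\) tail is absorbed using \(2\beta+\beta\eps<2\);
\item the cone term, with \(s'=s(F(0,x))\,|u(0,x)|^2\) and an \(O_{\mathrm{con}}(|w|^{2\beta+1})\) error;
\item the pulled-back remainder \(\Psi\), which you treat exactly as the paper does, by writing the new conormal fields as smooth combinations of the old ones and using \(|z|\sim|w|\).
\end{itemize}
The one step you leave implicit, \(\beta\eps\le1\), follows from \eqref{eq:epsilon-range}, since \(\beta\eps<2-2\beta<1\).
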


\begin{proof}
Set \(\lambda=2\beta+\beta\eps\), so \(1<\lambda<2\). Substituting \eqref{eq:general-adapted-change} into \(A+\bar zQ+z\overline Q\) produces new constant and normal-linear coefficients \(A'\) and \(Q'\), together with a smooth remainder \(O_{\mathrm{con}}(|w|^2)\), hence \(O_{\mathrm{con}}(|w|^\lambda)\).

For the cone term, set
\[
 S=s(F,\overline F)|u|^2,
 \qquad
 s'(x,\bar x)=S(0,0,x,\bar x).
\]
Then
\[
 \frac{s^\beta}{\beta^2}|z|^{2\beta}
 =\frac{S^\beta}{\beta^2}|w|^{2\beta},
\]
and \(S^\beta-(s')^\beta=O_{\mathrm{con}}(|w|)\). Thus the resulting error is \(O_{\mathrm{con}}(|w|^{2\beta+1})\), hence \(O_{\mathrm{con}}(|w|^\lambda)\).

It remains to transform \(\Psi\). The conormal vector fields in the \((w,x)\)-coordinates are smooth linear combinations of those in the \((z,y)\)-coordinates. For example,
\[
 w\partial_w
 =\frac{u+w\partial_wu}{u}\,z\partial_z
  +w(\partial_wF^a)\partial_{y^a},
 \qquad
 \partial_{x^c}
 =\frac{\partial_{x^c}u}{u}\,z\partial_z
  +(\partial_{x^c}F^a)\partial_{y^a},
\]
with analogous conjugate formulas; the inverse transition has the same property. Since \(|z|\) and \(|w|\) are uniformly comparable near \(D\), \eqref{eq:admissible-remainder} therefore gives
\[
 \Psi\circ(z,y)=O_{\mathrm{con}}(|w|^\lambda).
\]
Combining the three remainder terms proves the claim. Positivity of the new tangential Hessian and of \(s'\) follows from the corresponding properties of \(H\) and \(s\).
\end{proof}

The construction below follows Bochner's method for K\"ahler normal coordinates \cite{Bochner1947}, with the additional requirement that the divisor \(D\) be preserved.

\begin{proposition}[Adapted normal coordinates]\label{prop:relative-Bochner}
Let \(g\) be admissible and let \(p\in D\). There are adapted holomorphic
coordinates \((z,y)\) centred at \(p\) and a pluriharmonic choice of the
local potential \eqref{eq:admissible-potential} such that
\begin{equation}\label{eq:first-normalization}
 H(p)=I,
 \qquad
 q(p)=0,
 \qquad
 s(p)=1,
 \qquad
 \partial_cH_{a\bar b}(p)=0,
 \qquad
 \partial_cs(p)=0,
\end{equation}
and
\begin{equation}\label{eq:A-Bochner}
 A(y,\bar y)=|y|^2+O(|y|^4).
\end{equation}
Moreover, the quadratic part of \(Q\) can be normalized to
\begin{equation}\label{eq:Q-Bochner}
 Q(y,\bar y)=\frac12B_{ab}y^ay^b+O(|y|^3).
\end{equation}
Since \(q(p)=0\), we have \(L_g(p)=\C\partial_z\). Define the scalar
components \((\II_h)_{ab}(p)\) by
\[
 \II_h(\partial_{y^a},\partial_{y^b})\big|_p
 =(\II_h)_{ab}(p)\,\partial_z.
\]
Then
\begin{equation}\label{eq:B-II}
 B_{ab}=(\II_h)_{ab}(p).
\end{equation}
In particular, \(\II_h\) is symmetric in its two tangential arguments
and, using the natural identification \(L_g\simeq N_D\), may be regarded
as a section of \(\Sym^2T^{*1,0}D\otimes N_D\).
\end{proposition}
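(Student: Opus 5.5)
We carry out Bochner's normalization in stages, each time using a divisor-preserving change \eqref{eq:general-adapted-change} and a pluriharmonic change of potential. Lemma~\ref{lem:stability-admissibility} keeps the potential admissible with the same \(\eps\) after each step, and the transformation laws from the proof of Proposition~\ref{prop:boundary-data} track the data.

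\textbf{Step 1: first-order normalizations.}
A linear change in \(y\) gives \(H(p)=I\). Rescaling \(z\) by a constant gives \(s(p)=1\), since \(s'=|u|^2s\). Next take \(z=w\), \(y=x+wW\) with \(W\) constant. The law \(q'=\bar u q+H\overline W\) lets us choose \(W=-H^{-1}(p)\overline{q(p)}\), which forces \(q(p)=0\).

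\textbf{Step 2: first derivatives of \(s\).}
Take \(z=w\,u(x)\) with \(u=1+c_ax^a\). Then \(s'=|u|^2s\), and the holomorphic factor \(u\) shifts \(\partial_c\log s(p)\) by \(c_c\). Choosing \(c_c=-\partial_c\log s(p)\) gives \(\partial_c s(p)=0\). The conjugate derivatives vanish because \(s\) is real. This changes \(q\) only at order \(O(|y|)\), so \(q(p)=0\) survives. Step 3 below changes \(q(p)\) only through second-order terms in the \(y\)-coordinates, so it does not disturb this either.

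\textbf{Step 3: Bochner on \(D\).}
Apply the classical construction \cite{Bochner1947} to \(A\) in the tangential coordinates, via \(y=x+\) (holomorphic quadratic and cubic terms in \(x\)), together with subtraction of pluriharmonic terms \(\mathrm{Re}\) (holomorphic polynomial in \(y\)). This gives \(A=|y|^2+O(|y|^4)\) and \(\partial_cH(p)=0\). These changes do not alter \(z\), so \(s(p)\), \(\partial s(p)\) and \(q(p)\) are preserved.

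\textbf{Step 4: normalizing \(Q\).}
Expand \(Q\) to second order at \(p\). The pure antiholomorphic part \(\bar y\)-polynomial is removed by adding \(z f(y)+\overline{zf(y)}\), as in the earlier remark. The constant term is already removable this way. The linear term \(\bar y\)-linear can be removed similarly, and the \(y\)-linear part vanishes because \(q(p)=0\).

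The mixed \(y\bar y\) quadratic terms, \(E_{a\bar b}y^a\bar y^b\), remain. To kill them, use \(y=x+wW(x)\) with \(W\) linear in \(x\). The contribution \(\overline{W^b}\partial_{\bar b}A=\overline{W^b}y^b+\dots\) produces only \(y\bar y\)-type terms, so a suitable \(W\) cancels \(E\).

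What remains is \(\tfrac12B_{ab}y^ay^b\), which cannot be removed: it would require a pure tangential quadratic term in \(z'\), and this is forbidden by \eqref{eq:adapted-change}. Each of these later changes alters \(q(p)\), \(s\) and \(H\) only at higher order, so the normalizations of Steps 1--3 persist.

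\textbf{Step 5: identifying \(B\) with \(\II_h\).}
In these coordinates \(q_a=B_{ab}y^b+O(|y|^2)\) after the \(y\bar y\) part is killed. At \(p\) we have \(h(p)=I\) and \(dh(p)\) given by \(\partial_c h_{a\bar1}=\partial_c q_a=B_{ac}\), while \(\partial_c h_{1\bar1}\) and \(\partial_c h_{a\bar b}\) vanish by \eqref{eq:h-matrix} and the normalizations.

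The Chern connection at \(p\) is \(\Gamma=h^{-1}\partial h=\partial h\). Hence
\[
\pi_{L_g}\nabla^h_{\partial_a}\partial_b=h(\partial_a\partial_b\text{-coefficient})=\partial_a h_{b\bar1}\,\partial_z=B_{ba}\,\partial_z,
\]
using \(L_g(p)=\C\partial_z\) and \(h(p)=I\). Symmetry of \(B\) gives \eqref{eq:B-II} and the symmetry of \(\II_h\). Tensoriality of \(\II_h\) and the identification \(\pi\) then give the section of \(\Sym^2T^{*1,0}D\otimes N_D\).

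\textbf{Main obstacle.}
The delicate point is Step 4. We must check that the mixed quadratic part of \(Q\) is exactly absorbable by the \(wW(x)\) change. We must also check that the successive changes do not reintroduce lower-order terms in \(q\), \(s\) or \(A\). The plan is to order the changes by degree and verify each only affects strictly higher orders of previously fixed quantities.
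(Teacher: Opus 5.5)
Your proposal is correct and follows essentially the same route as the paper. Both arguments run Bochner's normalization of \(A\) on \(D\), choose a normal frame for \(N_D\) to get \(s(p)=1\) and \(\partial_c s(p)=0\), and apply a constant shear to get \(q(p)=0\). They then use a linear shear together with a pluriharmonic change \(zf(y)+\overline{zf(y)}\) to reduce the quadratic part of \(Q\) to \(\tfrac12B_{ab}y^ay^b\), and finally read off \((\II_h)_{ab}(p)=\Gamma^1_{ab}(p)=\partial_a h_{b\bar1}(p)=\partial_a\partial_bQ(p)\).

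The differences are only bookkeeping. You perform the normalizations in a different order, and you make the \(N_D\)-frame step explicit via \(u=1+c_ax^a\). You also use the inverse-form shear \(y=x+wW(x)\) with the transformation law from Proposition~\ref{prop:boundary-data}, rather than the paper's direct shear \(y'=y+zV(y)\).
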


\begin{proof}
Choose K\"ahler normal holomorphic coordinates \(y\) for
\((D,h_D)\) at \(p\). After adding a tangential pluriharmonic function
to the potential, we may assume
\[
 H(p)=I,
 \qquad
 \partial_cH_{a\bar b}(p)=0,
 \qquad
 A(y,\bar y)=|y|^2+O(|y|^4).
\]
Choose a local holomorphic frame of \(N_D\) which is Chern-normal for
\(h_N\) at \(p\), and choose the defining function \(z\) so that it
induces this frame. We may then arrange
\[
 s(p)=1,
 \qquad
 \partial_cs(p)=0.
\]
Finally, replace the tangential coordinates by
\[
 y^a\longmapsto y^a+z\lambda^a
\]
for suitable constants \(\lambda^a\), so that the coordinate normal line
agrees with \(L_g\) at \(p\). By \eqref{eq:h-matrix}, this is equivalent
to \(q(p)=0\). Thus \eqref{eq:first-normalization} and
\eqref{eq:A-Bochner} hold.

It remains to normalize the terms of \(Q\) of total degree at most two.
Consider a holomorphic shear
\begin{equation}\label{eq:shear-proof}
 z'=z,
 \qquad
 y'^a=y^a+zV^a(y),
\end{equation}
where \(V\) is holomorphic. Substituting the inverse change of coordinates
into \(A(y,\bar y)\), the coefficient of \(\bar z\) changes, through
total tangential degree two, by
\[
 Q-A_{\bar a}\,\overline{V^a(y)}+O(|y|^3).
\]
Independently, we may change the K\"ahler potential by an arbitrary
pluriharmonic function of the form
\[
 zf_1(y)+\overline{zf_1(y)},
\]
where \(f_1\) is holomorphic. Combining the shear with this change of
potential, the new coefficient of \(\bar z\) is therefore
\begin{equation}\label{eq:Q-transform-proof}
 Q'
 =
 Q-A_{\bar a}\,\overline{V^a(y)}
 +\overline{f_1(y)}
 +O(|y|^3).
\end{equation}

By \eqref{eq:A-Bochner},
\[
 A_{\bar a}=y^a+O(|y|^3).
\]
Since \(q(p)=0\), the holomorphic linear part of \(Q\) vanishes, and we
may take \(V(0)=0\). The linear part of \(V\) removes the quadratic terms
of bidegree \((1,1)\) in \(Q\), while \(f_1\) removes the constant,
antiholomorphic linear, and purely antiholomorphic quadratic terms.
Thus the only term of degree at most two which remains is a purely
holomorphic quadratic polynomial, giving \eqref{eq:Q-Bochner}.

Taking two holomorphic tangential derivatives of
\eqref{eq:Q-transform-proof} at \(p\) shows that this holomorphic
quadratic coefficient is unaffected by either the shear or the
pluriharmonic change of potential. Hence
\begin{equation}\label{eq:B-Q-proof}
 B_{ab}=\partial_a\partial_bQ(p).
\end{equation}
By Lemma~\ref{lem:stability-admissibility}, admissibility and the
conormal estimate \eqref{eq:admissible-remainder} remain valid in the
coordinates constructed above.

It remains to identify \(B_{ab}\) with the second fundamental form.
Let
\[
 e_1=\partial_z,
 \qquad
 e_a=\partial_{y^a}
\]
be the induced holomorphic frame of \(TX|_D\), and write
\[
 \nabla^h_{\partial_{y^a}}e_b
 =\Gamma^k_{ab}e_k.
\]
Metric compatibility of the Chern connection gives
\[
 \partial_a h_{b\bar j}
 =\Gamma^k_{ab}h_{k\bar j},
\]
and therefore
\[
 \Gamma^k_{ab}
 =h^{k\bar j}\partial_a h_{b\bar j}.
\]
At \(p\), \eqref{eq:first-normalization} and \eqref{eq:h-matrix} give
\(h_{i\bar j}(p)=\delta_{ij}\), while \(q(p)=0\) gives
\(L_g(p)=\C\partial_z\). Hence the \(L_g\)-component of
\(\nabla^h_{\partial_{y^a}}\partial_{y^b}\) at \(p\) is
\[
 \Gamma^1_{ab}(p)\partial_z
 =\partial_a h_{b\bar1}(p)\partial_z.
\]
By the definition of the scalar components \((\II_h)_{ab}(p)\), this
means
\[
 (\II_h)_{ab}(p)=\partial_a h_{b\bar1}(p).
\]
Since \eqref{eq:h-matrix} gives
\(h_{b\bar1}=q_b=\partial_bQ\) along \(D\), we conclude that
\[
 (\II_h)_{ab}(p)
 =\partial_a\partial_bQ(p)
 =B_{ab},
\]
which proves \eqref{eq:B-II}. Since \(B_{ab}=B_{ba}\), the second
fundamental form is symmetric.

Finally, the identity \(B_{ab}=(\II_h)_{ab}(p)\) shows that \(B_{ab}\)
is independent of the remaining freedom in the choice of adapted normal
coordinates.
\end{proof}

\begin{remark}[The remaining coordinate freedom]\label{rem:freedom}
	Adapted normal coordinates are not unique. Their linear freedom is the natural
	\(U(1)\times U(n-1)\) action. Through degree two, the normalization fixes every
	term involving a tangential direction, while purely normal quadratic terms
	remain free. No normalization is imposed on terms of degree three or higher,
	apart from the requirement that the divisor be preserved.
\end{remark}

\section{Curvature and holomorphic splitting}\label{sec:curvature}

We use the K\"ahler curvature convention
\begin{equation}\label{eq:curvature-convention}
 R_{i\bar j k\bar\ell}
 =-\partial_i\partial_{\bar j}g_{k\bar\ell}
 +g^{p\bar q}(\partial_i g_{k\bar q})(\partial_{\bar j}g_{p\bar\ell}).
\end{equation}
\begin{theorem}[Mixed curvature growth]\label{thm:main}
Let \(D\subset X\) be a smooth divisor, let \(1/2<\beta<1\), and let \(g\) be an admissible K\"ahler metric of cone angle \(2\pi\beta\) along \(D\). Fix \(p\in D\) and take adapted normal coordinates
\[
 (z,y^2,\ldots,y^n),\qquad D=\{z=0\},\qquad p=(0,0),
\]
as in Proposition~\ref{prop:relative-Bochner}. Then, along \(y=0\) and as \(z\to0\),
\begin{equation}\label{eq:main-coordinate}
 R_{a\bar1b\bar1}
 =\frac{\beta-1}{\bar z}(\II_h)_{ab}(p)+o(|z|^{-1}).
\end{equation}
Here the index \(1\) denotes the \(z\)-direction and \(a,b\) are tangential indices.
\end{theorem}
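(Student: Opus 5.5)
The plan is to evaluate the curvature formula \eqref{eq:curvature-convention} directly, with the admissible potential \eqref{eq:admissible-potential} written in the adapted normal coordinates of Proposition~\ref{prop:relative-Bochner}. Set \(\lambda=2\beta+\beta\eps\in(1,2)\) as in Remark~\ref{rem:epsilon-choice}, and take all estimates along \(y=0\). I will use the conormal bounds \eqref{eq:admissible-remainder} together with Remark~\ref{rem:normal-derivatives}: each ordinary \(z\)- or \(\bar z\)-derivative costs one factor \(\rho^{-1}\), and tangential derivatives cost nothing.

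\textbf{Step 1: metric components.} Differentiating \(\Phi\) should give
\[
 g_{1\bar1}=s^\beta\rho^{2\beta-2}+O(\rho^{\lambda-2}),
\]
\[
 g_{b\bar1}=q_b+\beta^{-1}\partial_b(s^\beta)\,z^{\beta}\bar z^{\beta-1}+O(\rho^{\lambda-1}),
\]
\[
 g_{a\bar b}=H_{a\bar b}+O(\rho^{2\beta})+O(\rho^{\lambda-2}),
\]
where the last term is absorbed using \(\lambda>1\), so that \(g_{a\bar b}\) is bounded. At \(y=0\) the normalisations \eqref{eq:first-normalization} give \(s=1\), \(\partial_cs=0\) and \(q=0\). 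Hence \(g_{c\bar1}=O(\rho^{\lambda-1})\) there, because the \(\partial_c(s^\beta)\) term vanishes at \(y=0\).

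Inverting the block matrix should then give
\[
 g^{1\bar1}=\rho^{2-2\beta}\bigl(1+O(\rho^{\beta\eps})\bigr),
 \qquad
 g^{1\bar d}=O\bigl(\rho^{2-2\beta}\rho^{\lambda-1}\bigr),
 \qquad
 g^{c\bar d}=O(1).
\]

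\textbf{Step 2: the second-derivative term is negligible.} Since \(q_b\) is independent of \(z\), I get
\[
 \partial_{\bar z}g_{b\bar1}
 =(\beta-1)\beta^{-1}\partial_b(s^\beta)\,z^\beta\bar z^{\beta-2}+O(\rho^{\lambda-2}).
\]
Applying \(\partial_a\) preserves these orders, so \(\partial_a\partial_{\bar1}g_{b\bar1}=O(\rho^{2\beta-2})+O(\rho^{\lambda-2})\). Both exponents exceed \(-1\), because \(\beta>1/2\) and \(\lambda>1\). Therefore this term is \(o(\rho^{-1})\).

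\textbf{Step 3: the quadratic term.} I split \(g^{p\bar q}(\partial_ag_{b\bar q})(\partial_{\bar z}g_{p\bar1})\) according to the index types of \(p\) and \(q\).

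The dominant piece is \(p=q=1\). Here
\[
 \partial_ag_{b\bar1}=\partial_a\partial_bQ+O(\rho^{2\beta-1})+O(\rho^{\lambda-1}),
\]
which equals \(B_{ab}=(\II_h)_{ab}(p)\) plus \(o(1)\) at \(y=0\), by \eqref{eq:B-Q-proof} and \eqref{eq:B-II}. The Step~1 formula for \(g_{1\bar1}\) gives
\[
 \partial_{\bar z}g_{1\bar1}=(\beta-1)\rho^{2\beta-2}\bar z^{-1}+O(\rho^{\lambda-3}).
\]
Combining with \(g^{1\bar1}\), the product \(g^{1\bar1}\partial_{\bar z}g_{1\bar1}\) equals \((\beta-1)/\bar z+O(\rho^{\beta\eps-1})\). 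This yields the main term \((\beta-1)\bar z^{-1}(\II_h)_{ab}(p)+o(\rho^{-1})\).

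The remaining index combinations are handled as follows.
\begin{itemize}
\item For \((p,q)=(c,1)\), the factor \(g^{c\bar1}\) is \(O(\rho^{2-2\beta}\rho^{\lambda-1})\) and \(\partial_{\bar z}g_{c\bar1}=O(\rho^{\lambda-2})\) at \(y=0\), so the product is \(O(\rho^{\lambda-2}\cdot\rho^{\lambda+1-2\beta})=o(\rho^{-1})\).
\item For \(q=d\) tangential, I use \(\partial_ag_{b\bar d}=\partial_aH_{b\bar d}+O(\rho^{\lambda-2}\cdot\rho)\). The \(\partial_aH_{b\bar d}\) part vanishes at \(y=0\) by \eqref{eq:first-normalization}, so \(\partial_ag_{b\bar d}=O(\rho^{\min(2\beta,\lambda-1)})\) there. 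For \(p=1\), I pair this with \(g^{1\bar d}\partial_{\bar z}g_{1\bar1}=O(\rho^{\lambda-2})\); for \(p=c\), with \(g^{c\bar d}\partial_{\bar z}g_{c\bar1}=O(\rho^{\lambda-2})\). In both cases the product is \(o(\rho^{-1})\).
\end{itemize}

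\textbf{Main obstacle.} I expect the main difficulty to be the bookkeeping for the inverse metric and the mixed terms. In particular, one must use exactly that \(q(p)=0\), \(\partial_cs(p)=0\) and \(\partial_cH(p)=0\) are evaluated on the whole slice \(y=0\), rather than as a neighbourhood bound. Without these normalisations, terms like \(g^{1\bar d}\) paired with \(\partial_{\bar z}g_{1\bar1}\sim\rho^{2\beta-3}\) would contribute at order \(\rho^{-1}\) or worse. I would therefore first write the inverse explicitly via the Schur complement \(g_{1\bar1}-g_{1\bar c}g^{c\bar d}_{\mathrm{tan}}g_{d\bar1}\), and only then collect the orders.
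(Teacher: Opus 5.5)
Your proposal is correct and is essentially the paper's own argument. It is the same direct evaluation of \eqref{eq:curvature-convention} in adapted normal coordinates along \(y=0\): the \(p=q=1\) quadratic term produces \((\beta-1)\bar z^{-1}(\II_h)_{ab}(p)\), the term \(\partial_a\partial_{\bar z}g_{b\bar1}=O(\rho^{2\beta-2})+O(\rho^{\lambda-2})\) is discarded because \(\beta>1/2\) and \(\lambda>1\), and the three remaining index types are bounded just as in Lemma~\ref{lem:order-separation}.

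One slip needs correcting. Tangential derivatives of \(\Psi\) cost no powers of \(\rho\), so \(g_{a\bar b}=H_{a\bar b}+O(\rho)+O(\rho^{2\beta})+O(\rho^{\lambda})\), where the \(O(\rho)\) comes from \(\bar zQ+z\overline{Q}\). Your version, \(H_{a\bar b}+O(\rho^{2\beta})+O(\rho^{\lambda-2})\), would be unbounded since \(\lambda<2\). With this fix the tangential block is \(\delta_{ab}+o(1)\), hence uniformly invertible. Your bound \(\partial_ag_{b\bar d}=O(\rho^{\lambda-1})\) at \(y=0\) also remains valid, because \(\lambda-1<1<2\beta\).
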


Fix \(p\in D\) and the coordinates of Proposition~\ref{prop:relative-Bochner}. By Lemma~\ref{lem:stability-admissibility}, the conormal remainder estimate \eqref{eq:admissible-remainder} remains valid in these normalized coordinates. All estimates below are taken along \(y=0\), as \(z\to0\).

\begin{lemma}[Metric and first-derivative estimates]\label{lem:metric-estimates}
In the adapted normal coordinates of Proposition~\ref{prop:relative-Bochner}, one has
\begin{align}
 g_{1\bar1}
 &=\rho^{2\beta-2}\bigl(1+O(\rho^{\beta\eps})\bigr),\label{eq:g11}\\
 g_{a\bar b}
 &=\delta_{ab}+o(1),\label{eq:gab}\\
 g_{1\bar a}
 &=O\bigl(\rho^{2\beta+\beta\eps-1}\bigr),\label{eq:g1a}\\
 g^{1\bar a}
 &=O\bigl(\rho^{1+\beta\eps}\bigr),\label{eq:ginv1a}
\end{align}
and
\begin{align}
 \partial_a g_{b\bar1}
 &=(\II_h)_{ab}(p)
   +O(\rho^{2\beta-1})
   +O(\rho^{2\beta+\beta\eps-1}),\label{eq:dagb1}\\
 \partial_a g_{b\bar d}
 &=O(\rho),\label{eq:dagbd}\\
 \partial_{\bar z}g_{c\bar1}
 &=O\bigl(\rho^{2\beta+\beta\eps-2}\bigr),\label{eq:dbargc1}\\
 \partial_{\bar z}g_{1\bar1}
 &=O(\rho^{2\beta-3}).\label{eq:dbarg11}
\end{align}
Finally,
\begin{equation}\label{eq:singular-log-derivative}
 g^{1\bar1}\partial_{\bar z}g_{1\bar1}
 =\frac{\beta-1}{\bar z}+o(\rho^{-1}).
\end{equation}
\end{lemma}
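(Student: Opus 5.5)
The estimates come from differentiating the potential \eqref{eq:admissible-potential} term by term, in the normalized coordinates of Proposition~\ref{prop:relative-Bochner}, and evaluating along \(y=0\). We write \(g_{i\bar j}=\partial_i\partial_{\bar j}\Phi\) and treat separately the tangential part \(A\), the linear part \(\bar zQ+z\overline Q\), the cone term \(\beta^{-2}s^\beta|z|^{2\beta}\), and the remainder \(\Psi\).

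For \(\Psi\) we use Remark~\ref{rem:normal-derivatives}. Each \(\partial_z\) or \(\partial_{\bar z}\) costs a factor \(\rho^{-1}\), and tangential derivatives cost nothing. Hence the contribution of \(\Psi\) is
\[
\rho^{2\beta+\beta\eps-2}\ \text{to } g_{1\bar1},\qquad
\rho^{2\beta+\beta\eps-1}\ \text{to } g_{1\bar a} \text{ and to } \partial_a g_{b\bar 1},\qquad
\rho^{2\beta+\beta\eps}\ \text{to } g_{a\bar b},
\]
with \(\partial_{\bar z}\) of these costing one further power \(\rho^{-1}\).

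For the cone term we compute \(\partial_z\partial_{\bar z}|z|^{2\beta}=\beta^2|z|^{2\beta-2}\). Since \(s(0)=1\) and \(\partial_c s(p)=0\), along \(y=0\) this term contributes exactly \(\rho^{2\beta-2}\) to \(g_{1\bar1}\). Its mixed contribution to \(g_{1\bar a}\) is \(\beta^{-1}\partial_{\bar a}(s^\beta)\,z^{\beta-1}\bar z^{\beta}\), which vanishes at \(y=0\). After a further \(\partial_a\) it becomes \(O(\rho^{2\beta-1})\), which is the middle error term in \eqref{eq:dagb1}. The tangential parts of \(g_{a\bar b}\) are \(H_{a\bar b}+O(\rho)\), coming from the linear terms, plus \(O(\rho^{2\beta})\), giving \eqref{eq:gab}. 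By \eqref{eq:A-Bochner} we have \(\partial_aH_{b\bar d}(0)=0\), so \(\partial_ag_{b\bar d}=O(\rho)\), which is \eqref{eq:dagbd}.

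The linear terms contribute \(\partial_{\bar a}Q+\overline{\partial_a\overline{Q}}\) type quantities to \(g_{1\bar a}\). Using \eqref{eq:Q-Bochner}, \(g_{1\bar a}\) receives \(\partial_{\bar a}\overline Q\), which vanishes at \(y=0\). Similarly \(g_{a\bar 1}\) receives \(\partial_aQ=B_{ab}y^b+O(|y|^2)\), whose \(\partial_b\)-derivative at \(y=0\) is \(B_{ab}=(\II_h)_{ab}(p)\) by \eqref{eq:B-II}. This gives \eqref{eq:dagb1} and \eqref{eq:g1a}. Since the linear terms are independent of \(z\) after one \(z\)-derivative, they do not contribute to \(\partial_{\bar z}g_{c\bar1}\), and \eqref{eq:dbargc1} comes from \(\Psi\) alone. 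Likewise \eqref{eq:dbarg11} follows from \(\partial_{\bar z}\rho^{2\beta-2}=(\beta-1)\rho^{2\beta-2}/\bar z\) together with the \(\Psi\) bound.

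For the inverse we apply block inversion. Write \(g_{1\bar1}\sim\rho^{2\beta-2}\), \(g_{a\bar b}\sim I\) and \(g_{1\bar a}=O(\rho^{2\beta+\beta\eps-1})\). Then
\[
g^{1\bar a}=-g^{1\bar1}g_{1\bar c}(g_{\text{tan}})^{-1}_{c\bar a}+\dots=O\bigl(\rho^{2-2\beta}\rho^{2\beta+\beta\eps-1}\bigr)=O(\rho^{1+\beta\eps}),
\]
which is \eqref{eq:ginv1a}. The Schur complement gives \(g^{1\bar1}=\rho^{2-2\beta}(1+O(\rho^{\beta\eps}))\), because \(|g_{1\bar a}|^2/g_{1\bar1}=O(\rho^{2\beta+2\beta\eps})\), which is \(o(\rho^{2\beta-2}\rho^{\beta\eps})\). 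Then
\[
g^{1\bar1}\partial_{\bar z}g_{1\bar1}=\frac{\beta-1}{\bar z}+O(\rho^{\beta\eps-1}),
\]
and since \(O(\rho^{\beta\eps-1})=o(\rho^{-1})\), this is \eqref{eq:singular-log-derivative}.

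The main obstacle is bookkeeping for the cone term off the slice \(y=0\). The factor \(s(y,\bar y)^\beta\) generates mixed terms of the form \(z^{\beta-1}\bar z^\beta\) and \(z^\beta\bar z^{\beta-1}\). We must check that these vanish on \(y=0\), using \(\partial s(p)=0\), or that they are only \(O(\rho^{2\beta-1})\) after a tangential derivative. Here it matters that the estimates are pointwise along \(y=0\) rather than uniform. A secondary point is \eqref{eq:g11}: the remainder \(\Psi\) perturbs \(g_{1\bar1}\) only relatively by \(O(\rho^{\beta\eps})\). The smooth \(O(|z|^2)\) pieces, and the \(z\)-dependence coming from \(\bar zQ\) once \(y\neq0\), contribute at most \(O(1)=O(\rho^{2\beta-2}\rho^{2-2\beta})\), and this is absorbed because \(\beta\eps<2-2\beta\) by \eqref{eq:epsilon-range}.
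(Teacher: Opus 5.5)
Your proposal is correct and follows the same route as the paper. Both differentiate the admissible potential term by term along \(y=0\), using \(s(p)=1\), \(q(p)=0\), \(\partial_c s(p)=0\), \(\partial_cH_{a\bar b}(p)=0\) and \(\partial_a\partial_bQ(p)=(\II_h)_{ab}(p)\), then apply block inversion. There are two cosmetic slips: in the Schur complement step the relative correction \(O(\rho^{2\beta+2\beta\eps})\) should be compared with \(\rho^{\beta\eps}\) rather than \(\rho^{2\beta-2+\beta\eps}\), and your closing remark about \(z\)-dependence from \(\bar zQ\) is unnecessary, since \(\partial_z\partial_{\bar z}\) annihilates \(\bar zQ+z\overline Q\) identically and smooth \(O(|z|^2)\) terms already sit inside \(\Psi\).
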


\begin{proof}
The normal--normal component is obtained by applying \(\partial_z\partial_{\bar z}\) to \eqref{eq:admissible-potential}. Since
\[
 \partial_z\partial_{\bar z}\frac{|z|^{2\beta}}{\beta^2}
 =\rho^{2\beta-2},
\]
and \(s(p)=1\), the singular term contributes \(\rho^{2\beta-2}\). Two ordinary normal derivatives of the remainder contribute \(O(\rho^{2\beta+\beta\eps-2})\), proving \eqref{eq:g11}.

The tangential component is
\[
 g_{a\bar b}
 =H_{a\bar b}+O(\rho)+O(\rho^{2\beta})+O(\rho^{2\beta+\beta\eps}),
\]
so \eqref{eq:gab} follows from \(H(p)=I\).

For the mixed component, differentiating first in \(\bar z\) gives
\[
 g_{b\bar1}
 =\partial_bQ
 +s^{\beta-1}(\partial_b s)\,z\rho^{2\beta-2}
 +O(\rho^{2\beta+\beta\eps-1}).
\]
Along \(y=0\), the first term is \(q_b(p)=0\) and the second vanishes because \(\partial_b s(p)=0\). This proves \eqref{eq:g1a} and its conjugate.

Block inversion of \eqref{eq:g11}--\eqref{eq:g1a} gives
\[
 g^{1\bar1}=\rho^{2-2\beta}\bigl(1+O(\rho^{\beta\eps})\bigr),
 \qquad
 g^{1\bar a}=O\bigl(\rho^{1+\beta\eps}\bigr),
\]
which proves \eqref{eq:ginv1a}.

Differentiating the displayed formula for \(g_{b\bar1}\) tangentially gives
\[
 \partial_a g_{b\bar1}
 =\partial_a\partial_bQ(p)
  +O(\rho^{2\beta-1})
  +O(\rho^{2\beta+\beta\eps-1}).
\]
By Proposition~\ref{prop:relative-Bochner}, \(\partial_a\partial_bQ(p)=(\II_h)_{ab}(p)\), giving \eqref{eq:dagb1}.

For \eqref{eq:dagbd}, the derivative of \(H_{b\bar d}\) vanishes at \(p\) by \eqref{eq:first-normalization}. The \(zQ+\bar z\bar Q\) terms contribute \(O(\rho)\), the cone term contributes \(O(\rho^{2\beta})\), and the remainder contributes \(O(\rho^{2\beta+\beta\eps})\). Since \(2\beta>1\), all are \(O(\rho)\).

In \(g_{c\bar1}\), the leading term \(\partial_cQ\) is independent of \(z\). The cone contribution is proportional to \(\partial_cs(p)\) and hence vanishes on \(y=0\). Applying one further \(\bar z\) derivative to the remainder gives \eqref{eq:dbargc1}. Similarly, differentiating \eqref{eq:g11} once in \(\bar z\) gives \eqref{eq:dbarg11}.

For the final formula, the leading term of \(\partial_{\bar z}g_{1\bar1}\) is
\[
 (\beta-1)z\rho^{2\beta-4}.
\]
Multiplying by \(g^{1\bar1}=\rho^{2-2\beta}(1+o(1))\) gives
\[
 (\beta-1)z\rho^{-2}+o(\rho^{-1})
 =\frac{\beta-1}{\bar z}+o(\rho^{-1}),
\]
which is \eqref{eq:singular-log-derivative}.
\end{proof}

\begin{lemma}[Order separation in the curvature formula]\label{lem:order-separation}
In the coordinates of Proposition~\ref{prop:relative-Bochner},
\begin{equation}\label{eq:second-derivative-small}
 -\partial_a\partial_{\bar z}g_{b\bar1}
 =O(\rho^{2\beta-2})
  +O(\rho^{2\beta+\beta\eps-2})
 =o(\rho^{-1}),
\end{equation}
while
\begin{equation}\label{eq:quadratic-main}
 g^{1\bar1}(\partial_a g_{b\bar1})(\partial_{\bar z}g_{1\bar1})
 =\frac{\beta-1}{\bar z}(\II_h)_{ab}(p)+o(\rho^{-1}).
\end{equation}
The sum of the remaining quadratic terms satisfies
\begin{equation}\label{eq:quadratic-remaining}
 \sum_{(p,q)\neq(1,1)}
 g^{p\bar q}(\partial_a g_{b\bar q})(\partial_{\bar z}g_{p\bar1})
 =o(\rho^{-1}).
\end{equation}
Consequently,
\begin{equation}\label{eq:coordinate-curvature-asymptotic}
 R_{a\bar1b\bar1}
 =\frac{\beta-1}{\bar z}(\II_h)_{ab}(p)+o(\rho^{-1}).
\end{equation}
\end{lemma}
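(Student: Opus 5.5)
Plan: I would expand the curvature formula \eqref{eq:curvature-convention} with \(i=a\), \(\bar j=\bar 1\), \(k=b\), \(\bar\ell=\bar1\):
\[
 R_{a\bar1b\bar1}=-\partial_a\partial_{\bar z}g_{b\bar1}+\sum_{p,q}g^{p\bar q}(\partial_ag_{b\bar q})(\partial_{\bar z}g_{p\bar1}).
\]
Each piece is then estimated along \(y=0\) using Lemma~\ref{lem:metric-estimates}, and \eqref{eq:coordinate-curvature-asymptotic} follows by adding \eqref{eq:second-derivative-small}, \eqref{eq:quadratic-main} and \eqref{eq:quadratic-remaining}.

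For the second-derivative term I would write \(g_{b\bar1}=\partial_{\bar z}\partial_b\Phi\), as in the proof of Lemma~\ref{lem:metric-estimates}. The term \(\partial_bQ\) is independent of \(z\), so it is killed by \(\partial_{\bar z}\). The cone term contributes \(\partial_a\partial_{\bar z}\bigl(s^{\beta-1}\partial_bs\,z\rho^{2\beta-2}\bigr)\). The \(\bar z\)-derivative produces \(z\bar z^{-1}\rho^{2\beta-2}\), of size \(\rho^{2\beta-2}\), with a coefficient \(\partial_a(s^{\beta-1}\partial_bs)\) that is merely bounded; this gives \(O(\rho^{2\beta-2})\). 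The remainder \(\Psi\) has three derivatives, two of them normal, so by Remark~\ref{rem:normal-derivatives} it gives \(O(\rho^{2\beta+\beta\eps-2})\). Since \(\beta>1/2\), \(2\beta-2>-1\), and hence both terms are \(o(\rho^{-1})\).

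For \eqref{eq:quadratic-main}, I would multiply \eqref{eq:dagb1} by \eqref{eq:singular-log-derivative}. The main term is \((\II_h)_{ab}(p)(\beta-1)/\bar z\). The cross terms are
\[
 O(\rho^{2\beta-1})\cdot O(\rho^{-1})=O(\rho^{2\beta-2})=o(\rho^{-1}),
\]
together with the analogous \(\eps\)-terms and \((\II_h)_{ab}(p)\cdot o(\rho^{-1})\).

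The main obstacle is \eqref{eq:quadratic-remaining}, which requires case-by-case bookkeeping; I would split it by index type.
\begin{itemize}
\item \((p,q)=(1,\bar d)\): the term is \(g^{1\bar d}\,\partial_ag_{b\bar d}\,\partial_{\bar z}g_{1\bar1}\), bounded by \(O(\rho^{1+\beta\eps})\,O(\rho)\,O(\rho^{2\beta-3})=O(\rho^{2\beta-1+\beta\eps})\).
\item \((p,q)=(c,\bar1)\): the term is \(g^{c\bar1}\,\partial_ag_{b\bar1}\,\partial_{\bar z}g_{c\bar1}\), bounded by \(O(\rho^{1+\beta\eps})\,O(1)\,O(\rho^{2\beta+\beta\eps-2})\), which is small.
\item \((p,q)=(c,\bar d)\): the term is \(g^{c\bar d}\,\partial_ag_{b\bar d}\,\partial_{\bar z}g_{c\bar1}\). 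Here I need \(g^{c\bar d}=O(1)\), which follows from block inversion, and then the bound is \(O(1)\,O(\rho)\,O(\rho^{2\beta+\beta\eps-2})=O(\rho^{2\beta+\beta\eps-1})\).
\end{itemize}
All three are \(o(\rho^{-1})\); indeed they are \(o(1)\).

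Two points need care. First, in \eqref{eq:dbarg11} the actual size of \(\partial_{\bar z}g_{1\bar1}\) is \(\rho^{2\beta-3}\), and it is only the product with \(g^{1\bar d}=O(\rho^{1+\beta\eps})\) and \(\partial_ag_{b\bar d}=O(\rho)\) that makes the \((1,\bar d)\) term harmless. The tangential normalization \(\partial_cH(p)=0\) is what gives \(\partial_ag_{b\bar d}=O(\rho)\) rather than \(O(1)\). Without it, this term would be of order \(\rho^{2\beta-2+\beta\eps}\), which is still \(o(\rho^{-1})\) but only just. Second, \eqref{eq:dbargc1} relies on \(\partial_cs(p)=0\) and \(q(p)=0\); these are needed so that \(g^{1\bar a}\) in \eqref{eq:ginv1a} is genuinely small, and I would check that this inverse estimate uses the full block inversion, with \(g_{a\bar b}\) invertible by \eqref{eq:gab}.
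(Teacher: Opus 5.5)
Your proposal is correct and follows the paper's proof essentially verbatim: the same split of \eqref{eq:curvature-convention}, the same cone/remainder bounds $O(\rho^{2\beta-2})+O(\rho^{2\beta+\beta\eps-2})$ for $\partial_a\partial_{\bar z}g_{b\bar1}$, the product of \eqref{eq:dagb1} with \eqref{eq:singular-log-derivative}, and the same three index cases with identical exponents. There are two harmless slips: the remainder contribution to $\partial_a\partial_{\bar z}g_{b\bar1}$ involves four derivatives of $\Psi$, two of them normal, not three; and \eqref{eq:dbargc1} uses only $\partial_cs(p)=0$, since the $z$-independent term $\partial_cQ$ is killed by $\partial_{\bar z}$ regardless, while $q(p)=0$ enters through \eqref{eq:g1a} and hence \eqref{eq:ginv1a}.
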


\begin{proof}
The term \(\partial_bQ\) in \(g_{b\bar1}\) is independent of \(z\) and is therefore annihilated by \(\partial_{\bar z}\). The cone contribution to \(\partial_a\partial_{\bar z}g_{b\bar1}\) has order \(O(\rho^{2\beta-2})\), while the ordinary normal derivative estimates following from \eqref{eq:admissible-remainder} give \(O(\rho^{2\beta+\beta\eps-2})\) for the remainder. Since \(\beta>1/2\), both exponents are strictly greater than \(-1\), proving \eqref{eq:second-derivative-small}.

Combining \eqref{eq:dagb1} with \eqref{eq:singular-log-derivative} gives
\[
 g^{1\bar1}(\partial_a g_{b\bar1})(\partial_{\bar z}g_{1\bar1})
 =\left((\II_h)_{ab}(p)+o(1)\right)
  \left(\frac{\beta-1}{\bar z}+o(\rho^{-1})\right),
\]
which is \eqref{eq:quadratic-main}.

It remains to estimate the other choices of \((p,q)\) in the quadratic term of \eqref{eq:curvature-convention}. If \(p,q\) are both tangential, \eqref{eq:dagbd} and \eqref{eq:dbargc1} give
\[
 O(1)\,O(\rho)\,O(\rho^{2\beta+\beta\eps-2})
 =O(\rho^{2\beta+\beta\eps-1}).
\]
If \(p=1\) and \(q\) is tangential, \eqref{eq:ginv1a}, \eqref{eq:dagbd}, and \eqref{eq:dbarg11} give
\[
 O(\rho^{1+\beta\eps})\,O(\rho)\,O(\rho^{2\beta-3})
 =O(\rho^{2\beta+\beta\eps-1}).
\]
If \(p\) is tangential and \(q=1\), the conjugate of \eqref{eq:ginv1a}, together with \eqref{eq:dagb1} and \eqref{eq:dbargc1}, gives
\[
 O(\rho^{1+\beta\eps})\,O(1)\,O(\rho^{2\beta+\beta\eps-2})
 =O(\rho^{2\beta+2\beta\eps-1}).
\]
Every exponent is greater than \(-1\), proving \eqref{eq:quadratic-remaining}. Substitution into \eqref{eq:curvature-convention} gives \eqref{eq:coordinate-curvature-asymptotic}.
\end{proof}

\begin{proof}[Proof of Theorem~\ref{thm:main}]
Lemma~\ref{lem:order-separation} gives exactly \eqref{eq:main-coordinate}.
\end{proof}

To read \eqref{eq:main-coordinate} as a curvature bound, write \(z=\rho e^{i\theta}\) and set
\[
 V_1=\rho^{1-\beta}\partial_z,
 \qquad
 V_a=\partial_{y^a}.
\]
By \eqref{eq:g11}--\eqref{eq:g1a} the Gram matrix of \((V_1,V_2,\ldots,V_n)\) tends to the identity as \(z\to0\) along \(y=0\), so this frame is asymptotically unitary, and \(R(V_a,\overline{V_1},V_b,\overline{V_1})=\rho^{2-2\beta}R_{a\bar1b\bar1}\). Since \(\bar z=\rho e^{-i\theta}\), Theorem~\ref{thm:main} therefore gives
\begin{equation}\label{eq:main-bounded-frame}
 R(V_a,\overline{V_1},V_b,\overline{V_1})
 =-(1-\beta)e^{i\theta}\rho^{1-2\beta}(\II_h)_{ab}(p)
 +o\bigl(\rho^{1-2\beta}\bigr).
\end{equation}

\begin{corollary}[Curvature growth]\label{cor:growth}
Under the assumptions of Theorem~\ref{thm:main}, suppose that \((\II_h)_p\neq0\). Then the curvature diverges as \(z\to0\) along \(y=0\):
\[
 \lim_{z\to0}|\Rm(g)|_g=+\infty.
\]
More precisely,
\begin{equation}\label{eq:growth-liminf}
 \liminf_{z\to0}|z|^{2\beta-1}|\Rm(g)|_g>0.
\end{equation}
\end{corollary}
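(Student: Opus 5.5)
The plan is to bound \(|\Rm(g)|_g\) from below by a single component of the curvature tensor, evaluated in the asymptotically unitary frame \((V_1,V_2,\ldots,V_n)\) introduced after Theorem~\ref{thm:main}, and then to read off the rate from \eqref{eq:main-bounded-frame}.

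First, since \((\II_h)_p\neq0\), fix tangential indices \(a,b\) with \((\II_h)_{ab}(p)\neq0\). Because the Gram matrix \(G\) of \((V_1,\ldots,V_n)\) tends to the identity along \(y=0\) (by \eqref{eq:g11}--\eqref{eq:g1a}), there is a uniform constant \(c>0\) such that, for all \(z\) close to \(0\) on \(y=0\),
\[
 |\Rm(g)|_g\ \ge\ c\,\bigl|R(V_a,\overline{V_1},V_b,\overline{V_1})\bigr|.
\]
To justify this, note that the norm of a \(4\)-tensor is computed from its components in any frame via \(G^{-1}\). I would either orthonormalize the frame by Gram--Schmidt, which changes the vectors by \(I+o(1)\) and therefore changes the component by a factor \(1+o(1)\) plus an \(o(1)\) multiple of the other components, or simply use the elementary inequality \(|T(e_1,\ldots,e_4)|\le|T|_g\prod|e_i|_g\) together with \(|V_i|_g\to1\). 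The second option is cleaner: it gives \(|\Rm|_g\ge|R(V_a,\overline{V_1},V_b,\overline{V_1})|/\prod|V_i|_g\), so I will use it.

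Second, substitute \eqref{eq:main-bounded-frame}:
\[
 \bigl|R(V_a,\overline{V_1},V_b,\overline{V_1})\bigr|
 =(1-\beta)\,\rho^{1-2\beta}\,|(\II_h)_{ab}(p)|+o(\rho^{1-2\beta}).
\]
Multiplying by \(|z|^{2\beta-1}=\rho^{2\beta-1}\) and taking \(\liminf\) gives at least \((1-\beta)|(\II_h)_{ab}(p)|>0\), because \(\beta<1\). This proves \eqref{eq:growth-liminf}. Since \(\beta>1/2\), we have \(2\beta-1>0\), so \(\rho^{1-2\beta}\to\infty\) and hence \(|\Rm(g)|_g\to+\infty\).

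The only point requiring care is the comparison in the first step: the frame is only asymptotically unitary, and \(V_1\) is not exactly \(g\)-orthogonal to the \(V_a\). The product-of-norms inequality avoids this issue entirely, since it needs only \(|V_i|_g\to1\). That in turn follows from \(g_{1\bar1}\rho^{2-2\beta}\to1\) and \(g_{a\bar a}\to1\). I expect no further obstacle.
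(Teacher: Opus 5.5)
Your proof is correct and follows the paper's argument: you choose \(a,b\) with \((\II_h)_{ab}(p)\neq0\) and read off the rate \(\rho^{1-2\beta}\) from \eqref{eq:main-bounded-frame} in the asymptotically unitary frame. Your inequality \(|R(V_a,\overline{V_1},V_b,\overline{V_1})|\le|\Rm(g)|_g\prod|V_i|_g\), with \(|V_i|_g\to1\), makes explicit the comparison between a single frame component and \(|\Rm(g)|_g\), which the paper leaves implicit.
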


\begin{proof}
Choose \(a,b\) with \((\II_h)_{ab}(p)\neq0\). In the asymptotically unitary frame above, \eqref{eq:main-bounded-frame} gives
\[
 \rho^{2\beta-1}\bigl|R(V_a,\overline{V_1},V_b,\overline{V_1})\bigr|
 \longrightarrow
 (1-\beta)\bigl|(\II_h)_{ab}(p)\bigr|>0,
\]
so \eqref{eq:growth-liminf} holds. Since \(2\beta-1>0\), the curvature is unbounded.
\end{proof}

\begin{corollary}[Holomorphic splitting]\label{cor:splitting}
Under the assumptions of Theorem~\ref{thm:main}, if \(|\Rm(g)|_g\) is locally bounded near \(D\), then the sequence \eqref{eq:normal-sequence-intro} splits holomorphically.
\end{corollary}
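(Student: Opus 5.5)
By Corollary~\ref{cor:growth}, local boundedness of \(|\Rm(g)|_g\) near \(D\) forces \((\II_h)_p=0\) at every \(p\in D\). Otherwise \eqref{eq:growth-liminf} would give \(|\Rm(g)|_g\to\infty\) along \(y=0\) as \(z\to0\). Since \(\II_h\) is a tensor (Proposition~\ref{prop:relative-Bochner}), this gives \(\II_h\equiv0\) on \(D\). The task is then to translate the vanishing of the second fundamental form of \(TD\subset (TX|_D,h)\) into a holomorphic splitting of \eqref{eq:normal-sequence-intro}.

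The key step is the standard fact about Chern connections on holomorphic subbundles. For a holomorphic subbundle \(S\subset E\) with Hermitian metric \(h\), the \(h\)-orthogonal complement \(S^\perp\) is a smooth subbundle with \(S^\perp\simeq E/S\) smoothly. The \((1,0)\)-part of the Chern connection induces the second fundamental form \(\sigma\in A^{1,0}(\mathrm{Hom}(S,S^\perp))\). Its adjoint gives, up to sign, the \((0,1)\)-form \(\sigma^*\in A^{0,1}(\mathrm{Hom}(S^\perp,S))\). Moreover, \(\bar\partial_E\) restricted to \(S^\perp\) equals \(\bar\partial_{E/S}\) plus a component in \(S\) given by \(-\sigma^*\). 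I would check that Definition~\ref{def:boundary-metric} agrees with this \(\sigma\), taking \(E=TX|_D\), \(S=TD\), and \(S^\perp=L_g\) by \eqref{eq:boundary-metric-geometric}. Indeed, \(\II_h(U,V)=\pi_{L_g}(\nabla^h_UV)\) for \(U\in T^{1,0}D\) and \(V\) a section of \(TD\). Here \(V\) may be taken holomorphic, since \(\nabla^{h,0,1}\) preserves holomorphic sections and the projection is tensorial. So \(\II_h\equiv0\) means \(\sigma=0\), hence \(\sigma^*=0\). Then \(\bar\partial\) maps sections of \(L_g\) to \(L_g\)-valued forms, so \(L_g\) is a holomorphic subbundle of \(TX|_D\). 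Since \(L_g\) is transverse to \(TD\) (Proposition~\ref{prop:boundary-data}), we get \(TX|_D=TD\oplus L_g\) holomorphically. The composite \(N_D\cong L_g\hookrightarrow TX|_D\) is then a holomorphic splitting of \eqref{eq:normal-sequence-intro}.

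To make the adjoint step concrete in the present setting, I would argue in the local frame \((\partial_z,\partial_{y^a})\). The section \(\ell=\partial_z-H^{a\bar b}\overline{q_b}\partial_{y^a}\) spans \(L_g\), so holomorphicity of \(L_g\) is equivalent to \(\bar\partial\) of the coefficients \(H^{a\bar b}\overline{q_b}\) vanishing modulo multiples of \(\ell\). That multiple is zero, since the \(\partial_z\)-coefficient is \(1\). It therefore suffices to show \(\bar\partial_{\bar c}(H^{a\bar b}\overline{q_b})=0\). Conjugating, this is \(\partial_c(H^{b\bar a}q_b)\)-type expressions, which are exactly the components \(h^{k\bar j}\partial_c h_{b\bar j}\) projected to \(L_g\), i.e. \(\II_h\). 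The pointwise normalization at each \(p\) (\(H(p)=I\), \(\partial H(p)=0\), \(q(p)=0\)) gives \(\partial_c q_b(p)=(\II_h)_{cb}(p)\) by the proof of Proposition~\ref{prop:relative-Bochner}. Therefore \(\partial_{\bar c}(H^{a\bar b}\overline{q_b})(p)=\overline{(\II_h)_{cb}(p)}\delta^{ab}=0\). Since \(p\in D\) is arbitrary, and the vanishing of \(\bar\partial\ell\) modulo \(\ell\) is invariant under rescaling \(\ell\) and under the coordinate changes of Proposition~\ref{prop:boundary-data} (which give \(\ell'=u\ell\) with \(u\) holomorphic), this proves \(L_g\) is holomorphic.

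The main obstacle I expect is making this pointwise-normalized calculation legitimate. The frame \(\ell\) from the original coordinates must be compared with the one in the normalized coordinates at \(p\). Also, the quantity \(\bar\partial\ell \bmod \ell\), as a \(TX|_D/L_g\)-valued \((0,1)\)-form, must be checked to be tensorial, so that evaluating it at \(p\) in adapted normal coordinates is valid. Tensoriality follows from the transformation law \(\ell'=u\ell\) with \(u\) holomorphic and nonvanishing. With that in hand, the rest is the abstract subbundle argument above.
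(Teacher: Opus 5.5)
Your proposal is correct and follows the paper's proof. Corollary~\ref{cor:growth} forces \(\II_h\equiv0\), and the standard fact that the \(h\)-orthogonal complement of a holomorphic subbundle is \(\bar\partial\)-invariant once the second fundamental form vanishes (the paper simply cites \cite{Kobayashi1987}) then makes \(L_g\) a holomorphic complement to \(TD\). Your explicit check that \(\partial_{\bar c}\bigl(H^{a\bar b}\overline{q_b}\bigr)(p)=\overline{(\II_h)_{ca}(p)}\) in adapted normal coordinates, together with the tensoriality of \(\bar\partial\ell \bmod L_g\) coming from \(\ell'=u\ell\), is a valid concrete unpacking of that citation.
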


\begin{proof}
By Corollary~\ref{cor:growth}, bounded curvature implies \(\II_h\equiv0\). By the standard theory of Hermitian holomorphic subbundles \cite{Kobayashi1987}, the \(h\)-orthogonal complement \(TD^{\perp_h}=L_g\) is then a holomorphic subbundle of \(TX|_D\), so the normal bundle sequence splits holomorphically.
\end{proof}

\subsection{Remarks}

\begin{remark}
	 The statement of Corollary~\ref{cor:splitting} applies in particular to K\"ahler--Einstein metrics with cone singularities; cf.~Remark~\ref{rem:JMR}. 
\end{remark}

\begin{remark}\label{rem:SongWang}
	For polyhomogeneous conical K\"ahler metrics with bounded Ricci curvature, Song--Wang \cite[Theorem~4.6]{SongWang2016} prove the matching upper bound \(|\Rm(g)|_g=O(\rho^{1-2\beta})\); thus, in that setting, the exponent in Corollary~\ref{cor:growth} is sharp whenever \(\II_h\neq0\).	
\end{remark}

\begin{remark}
	Consider the Calabi--Yau metric on \(\C^2\) with cone angle \(2\pi\beta\) along the smooth conic \(C=\{zw=1\}\) obtained by Donaldson \cite[\S5]{Donaldson2012} using the Gibbons--Hawking ansatz. For \(\beta>1/2\) its curvature is unbounded near \(C\); see \cite[\S3.3]{deBorbonGH}. Moreover, the discussion in \cite[\S4]{deBorbonGH} shows that the sectional curvature remains bounded as \(C\) is approached in the tangential and in the normal directions, while it is unbounded in the intermediate directions. This gives a concrete model for the mixed nature of the curvature growth in \eqref{eq:main-bounded-frame}.
\end{remark}

\begin{remark}\label{rem:codazzi}
	Equation~\eqref{eq:main-coordinate} shows that the mixed curvature is governed
	to leading order by the second fundamental form \(\II_h\) of \(D\). This is
	reminiscent of the Codazzi equation in the smooth setting \cite[Chapter~VII, \S4]{KobayashiNomizuII}.
\end{remark}

\section*{AI declaration}
The idea of relating bounded curvature to the vanishing of the second fundamental form
\(\II_h\) of \(TD\subset TX|_D\), equipped with the boundary Hermitian metric \(h\), is due to
the author. While preparing a grant application for EPSRC, I discussed this problem with
ChatGPT. In the course of that discussion, ChatGPT derived the local curvature asymptotic \eqref{eq:main-coordinate}, which provided the key step in the proof. I subsequently
checked and developed the argument presented here.

\bibliographystyle{plain}
\bibliography{ref}

\end{document}